\newcommand{\R}{\mathbb{R}}
\newcommand{\supp}{\text{supp}}
\newcommand{\spc}{\text{ }}
\newcommand{\y}{\textbf{y}}
\newcommand{\x}{\textbf{x}}
\newcommand{\p}{\textbf{p}}
\newcommand{\w}{\textbf{w}}
\begin{document}

\title{A Column Generation Approach to the\\ Discrete Barycenter Problem }

\author{Steffen Borgwardt\inst{1} \and Stephan Patterson\inst{2}}

\institute{\email{\href{mailto:steffen.borgwardt@ucdenver.edu}{steffen.borgwardt@ucdenver.edu}};
University of Colorado Denver 
\and \email{\href{mailto:stephan.patterson@lsus.edu}{stephan.patterson@lsus.edu}}; Louisiana State University in Shreveport
%University of Colorado Denver 
}

\date{}

\maketitle
\begin{abstract}
The discrete Wasserstein barycenter problem is a minimum-cost mass transport problem for a set of discrete probability measures. Although an exact barycenter is computable through linear programming, the underlying linear program can be extremely large. For worst-case input, a best known linear programming formulation is exponential in the number of variables, but has a low number of constraints, making it an interesting candidate for column generation.

In this paper, we devise and study two column generation strategies: a natural one based on a simplified computation of reduced costs, and one through a Dantzig-Wolfe decomposition. For the latter, we produce efficiently solvable subproblems, namely, a pricing problem in the form of a classical transportation problem. The two strategies begin with an efficient computation of an initial feasible solution. While the structure of the constraints leads to the computation of the reduced costs of all remaining variables for setup, both approaches may outperform a computation using the full program in speed, and dramatically so in memory requirement. In our computational experiments, we exhibit that, depending on the input, either strategy can become a best choice. \end{abstract}
 
\noindent{\bf Keywords:} discrete barycenter, optimal transport, linear programming, column generation\\
\noindent{\bf MSC 2010:} 49M27, 90B80, 90C05, 90C08, 90C46

\section{Introduction}\label{sec:intro}

Optimal transport problems involving joint transport to a set of probability measures appear in a variety of fields, including recent work in image processing  \cite{jbtcg-19,sa-19}, machine learning \cite{hbccp-19,shbmccps-18,ydpbbgc-19}, and graph theory \cite{uddgn-18}, to name but a few. The so-called {\em Wasserstein barycenters} take a central role in many of these applications: a {\em barycenter} is another probability measure which minimizes the total distance to all input measures (i.e, images) and acts as an average distribution in the probability space; the {\em squared Wasserstein distance} is of particular consideration due to the preservation of the geometric structure of the input. The wide scope of optimal transport problems makes challenging even a reasonably comprehensive summary; for recent monographs on the Wasserstein distance and computational optimal transport, we refer the reader to  \cite{pc-18} and \cite{pz-19}, in addition to the seminal work of Villani \cite{v-09}. 

In almost all applications, the probability measures have discrete support, i.e., a finite number of points to which positive mass is associated. This leads to the so-called {\em discrete barycenter problem}, defined as follows: Given a set of probability measures $P_1, \ldots, P_n$, each with a finite set $\supp{(P_i)}$ of support points in $\R^d$ and associated masses, and a set of $n$ nonnegative weights $\lambda_i \in \R$ with $\sum_{i=1}^n \lambda_i =1$, find a probability measure $\bar P$ on $\R^d$, that is, a (Wasserstein) barycenter, satisfying
\begin{equation}\label{eqn:barycost}
\phi(\bar P):=\sum\limits_{i=1}^n \lambda_i W_2(\bar P, P_i)^2 = \inf\limits_{P\in \mathcal{P}^2(\R^d)} \sum\limits_{i=1}^n \lambda_i W_2(P,P_i)^2,
\end{equation}
where $W_2$ is the quadratic Wasserstein distance and $\mathcal{P}^2(\R^d)$ is the set of all probability measures on $\R^d$ with finite second moments \cite{ac-11}. 
%The solution to the barycenter problem is a probability measure $\bar{P}$ which minimizes the cost of transport to a given set of probability measures $P_1, \ldots, P_n$. 
Since $P_1, \ldots, P_n$ have finite sets of support points, we call the measures {\em discrete}. As the $P_i$ are measures, the total mass of their support points sums up to $1$. With $d_i$ representing the mass of $\x_i \in \supp{(P_i)}$, this can be denoted as $\sum_{\x_i\in\supp{(P_i)}} d_{i} = 1$. Because $P_1, \ldots P_n$ are discrete, the solution measure $\bar P$ also has a finite set of support points, and the Wasserstein distance is the squared Euclidean distance \cite{abm-16,coo-15,v-09}.

The discrete barycenter problem is a multi-marginal optimal transport problem, and as such is significantly more challenging than the classical two-marginal optimal transport problem referenced later in Section \ref{sec:price}. In fact, multi-marginal optimal transport is no longer a network flow problem \cite{lhcj-19}, and a variant of the problem -- finding optimal solutions with a bound on the size of the support set -- has recently been shown to be NP-hard \cite{bp-21a}. Further, there is current work on NP-hardness in all situations \cite{ab-21}.

Therefore, considerable activity continues on exact, approximate and heuristic methods of computation, such as alternating minimization algorithms \cite{qp-18,tdgu-20,ylst-19}. State-of-the art approximation methods solve the entropy regularized optimal mass transport problem introduced in \cite{c-13}. Entropic regularization leads to a strongly convex program, and its smoothing effects give qualitatively different solutions from exact barycenters \cite{bccnp-14}. Entropy regularized transport problems can be solved efficiently, with a linear-in-$n$ complexity bound, using iterative Bregman projection algorithms \cite{bccnp-14,cd-14,twk-18}, although work continues on the stability and complexity of these algorithms, e.g., \cite{kddgtu-19}. In contrast, exact solutions to the discrete barycenter problem are commonly used for benchmarking purposes and only possible for small input; one of their advantages is that they find a barycenter of provably sparse support and associated transport that is non-mass splitting (see Definition \ref{def:nonmasssplit}). Exact barycenters can be computed by linear programming \cite{ac-11,abm-16,coo-15}, but all known LP formulations scale exponentially. %We provide some details in Section \ref{sec:LPforBary}.

The vast majority of algorithms in the literature, including the above examples,  are based on an explicit specification of a discrete set $S \subset \mathbb{R}^d$ of support points that may be allocated mass; see, e.g., \cite{bccnp-14,b-17,bp-18,coo-15,cd-14,kddgtu-19,scsj-17}. The search for an optimal $P \in \mathcal{P}^2(\R^d)$ in Eq.~\ref{eqn:barycost} is replaced by a search over $\mathcal{P}^2(S)$. The size of $S$ typically is the main bottleneck for the practical performance of algorithms \cite{ab-21}. 

Different types of input lead to a different level of challenge. In image processing, for example, the probability measures are supported on the same structured set (a pixel grid). This highly structured support is a best-case input. In this setting, a barycenter can be computed exactly in polynomial time \cite{bp-18}. In practice, the cost is still prohibitive: an exact barycenter lies in an $n$-times finer grid. It is common practice to use a coarser grid to find an approximate barycenter. The original grid already contains a $2$-approximation \cite{b-17}.

By contrast, a worst-case input occurs for measures with no known structure, such as in wildfire ignition points or crime locations \cite{bp-18}. Then it becomes difficult to specify a small set $S$ of possible support points for a `good' approximation or one that allows for the computation of an exact barycenter. However, the existence of sparse solutions to the problem \cite{abm-16} for any input indicates that strategies which dynamically introduce support points, or collections of support points, would be promising to approach these difficult instances.

In this paper, we use linear programming theory and column generation techniques to take a step in this direction. We will advance the state-of-the-art on the computation of exact barycenters for such worst-case input. These computations will still remain costly.%, even with the improvements that we are able to provide.

%In this paper, we seek to address this gap through the development of an exact algorithm tailored to worst-case input. 
\subsection{Linear Programming for The Discrete Barycenter Problem}\label{sec:LPforBary}

The discrete barycenter problem can be solved exactly by linear programming \cite{ac-11,abm-16,bp-18,coo-15}, but all known LP formulations may require an exponential number of variables, scaling by the product of the sizes of the support sets of the input measures \cite{bp-18}. Some formulations also have an exponential number of constraints, but for any input there exists one with an extremely low number of constraints. These dimensions indicate the linear program is a promising candidate for column generation. 

The so-called {\em non-mass-splitting property}, satisfied by all exact barycenters, is a crucial tool for the linear programming approach to the problem that we use in this paper. It states that any optimal transport plan (the support points in each $P_1, \ldots, P_n$ to which each support point in $P$ transports mass, and the amount of mass transported) of a barycenter may not send mass to more than one support point in each measure \cite{ac-11,abm-16}. This property is fundamental to the modeling of many physical applications where a mass split would be infeasible.

\begin{definition}[The Non-mass-splitting Property]\label{def:nonmasssplit} The mass of each barycenter support point is transported fully to a single support point in each measure; that is, for each $x_k \in \supp(\bar P)$ with corresponding mass $z_k$, $k = 1, \ldots, |\supp(\bar P)|$, there exists exactly one $x_{i} \in \supp(P_i)$, $i = 1, \ldots, n$ to which the entire mass $z_k$ is transported in any optimal transport plan.  \end{definition}

Since the non-mass-splitting property holds for all barycenters, each support point in a barycenter is associated with a single combination of input support points, consisting of the points to which its mass is transported. The set of combinations of input support points is denoted $S^* =  \{ (\x_{1}, \ldots, \x_{n}) : \x_{i} \in \supp(P_i) \text{ for } i = 1, \ldots, n \}$, with elements $s_h = (\x_1^h, \x_2^h, \ldots, \x_n^h)$, $h = 1,\ldots, |S^*|$. Each combination $s_h$ has an associated \emph{weighted mean} $\x^h = \sum_{i=1}^n \lambda_{i} \x_{i}^h$. The weighted mean $\x^h$ is the optimal location for joint mass transport to the points in the combination $s_h$. Therefore, specifying the set $S$ to contain all distinct weighted means makes it the set of {\em all possible support points} for the barycenter.

This notation allows us to formally describe the worst-case setting to which our algorithm will be tailored: when each combination $s_h$ produces a different weighted mean $\x^h$. Then we say the measures $P_1, \ldots, P_n$ are in \textit{general position}, and using $|P_i|$ to denote the size of the support set of $P_i$, the number of distinct $\x^h$ is $|S^*| = \prod_{i=1}^n |P_i|$. Thus the number of weighted means is exponential in the number of input measures $n$ -- without additional knowledge, the set of possible support points $S$ would be of size $|S|=|S^*|$.

We provide an example of a discrete measure in $\R^2$ in Figure \ref{fig:measures} (left), and three measures in general position in Figure \ref{fig:measures} (right). For this tiny example, verifying that the measures are in general position is elementary but somewhat tedious, as the weighted means of all 27 combinations of support points must be computed and verified as unique; in general, verifying whether a particular set contains the correct possible support points is NP-hard \cite{bp-18}. A barycenter for these measures is displayed in Figure \ref{fig:measwithbary} (left), shown with associated transport in Figure \ref{fig:measwithbary} (right).

\begin{figure}[!t]
\begin{center}
\fbox{\includegraphics[scale=.5]{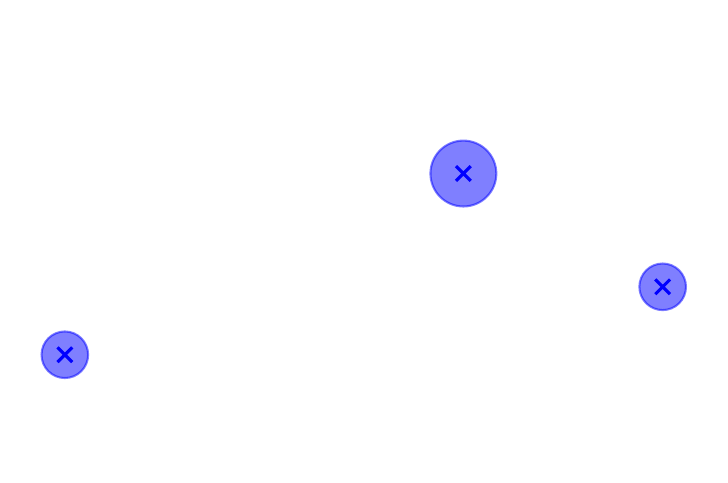}}
\fbox{\includegraphics[scale=.5]{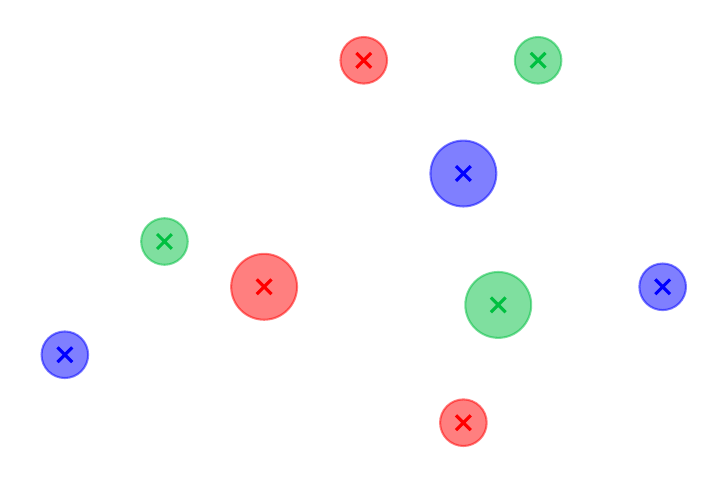}}
\end{center}
\caption{(left) A discrete probability measure  in $\R^2$ with three support points. The size of the points indicates their associated mass. (right) Three measures in $\R^2$ each with three support points. All combinations $(\x_1, \x_2, \x_3)$ produce a different weighted mean; therefore these measures are in general position.}\label{fig:measures}
\end{figure}

\begin{figure}
\begin{center}
\fbox{\includegraphics[scale=.5]{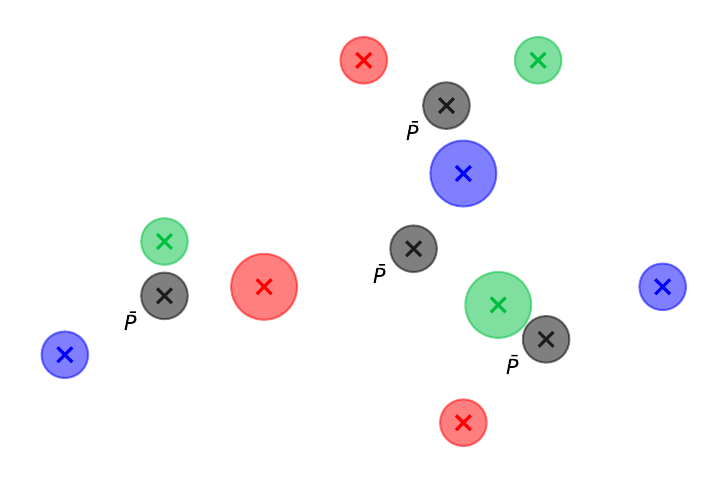}}
\fbox{\includegraphics[scale=.5]{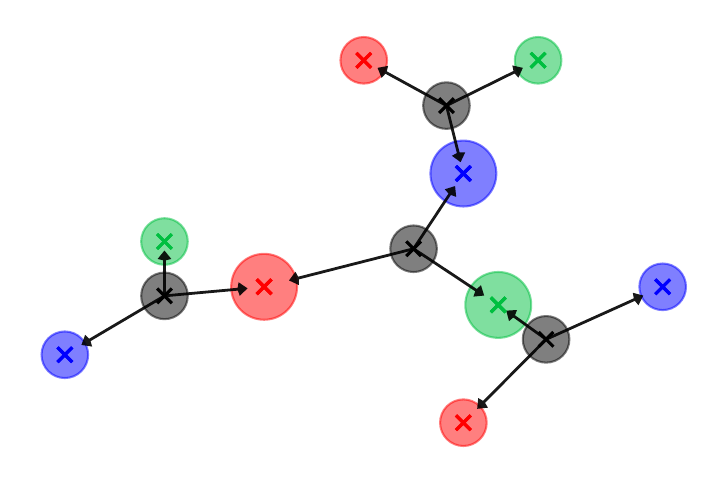}}
\end{center}
\caption{(left) Assuming $\lambda_i=\frac{1}{3}$ for $i = 1,2,3$, a barycenter $\bar P$ for the three measures from Figure \ref{fig:measures}. Each support point has mass $\frac{1}{4}$.  (right) The mass transport from each barycenter support point to the original measures. Each barycenter support point is the weighted mean of the points to which it transports.}\label{fig:measwithbary}
\end{figure}

%\subsection{A Best Linear Program for Measures in General Position} 
In this paper, we build on an LP formulation from \cite{bp-18} that, among known formulations, requires the fewest variables and constraints for general position measures. %For grid-structured measures, a different type of LP formulation based on $S$ performs better in this simpler setting. Details of both types of LP formulations are presented in \cite{bp-18}.  
In this formulation, which we call LP (\ref{LPw}), a variable is introduced for each combination $s_h$ of support points from $S^*$: each $s_h$ has a corresponding variable $w_h$ representing the mass assigned to $\x^h$ and transported fully to each $\x_i^h$, $i = 1, \ldots, n$. The total transport cost of a unit of mass from $\x^h$ is given by $c_h = \sum_{i=1}^n ||\x^h-\x_{i}^h||^2$. %The objective of the LP formulation is a minimization of the total mass transport cost.

Constraints arise from the requirement that the total transport to each support point $\x_i$ in each measure is exactly equal to its mass $d_i$. This produces one equality constraint for each $\x_i$ in each measure; that is, 
\[ \sum_{h:\x_i^h = \x_i} w_h = d_i, \spc\spc\spc \forall i = 1, \ldots, n, \spc\spc \forall \x_i \in \supp(P_i).  \]

Recall that $\sum_{\x_i\in\supp{(P_i)}} d_{i} = 1$ for each $P_i$. Thus there exists a feasible solution to the union of all these constraints, and it has to satisfy $\sum_{h=1}^{|S^*|} w_h = 1$. The constraints can be represented as a real $\sum_{i=1}^n |P_i| \times \prod_{i=1}^n |P_i|$ matrix $A$ times a vector $w$, equal to right-hand side $d$. In $A$, column $h$ contains ones in the $n$ rows where $\x_i^h = \x_i$, and zeroes otherwise. The matrix $A$ is highly structured, which will be both good and bad for our purposes; we take a closer look at it in Section \ref{sec:price}. With $w$ as the vector of variable masses $w_h$, $c$ as the vector of associated costs $c_h$, and $d$ as the vector of masses $d_i$ of the support points $\x_i$ in the input, a full formulation of LP (\ref{LPw}) is as follows: 

%\begin{equation*}\label{LPw}
%\begin{array}{crll}
%\tag{general}  \mathrm{min}  &\multicolumn{3}{l}{\sum\limits_{h = 1}^{|S^*|} c_h w_h }\nonumber \\
%\mathrm{s.t.  } &\sum\limits_{h:h \in S_{ik}^*} w_h &= \spc\spc d_{ik}, &\forall i=1,\ldots,n,\spc\forall k=1,\ldots,|P_i|\\
% &w_h  &\geq  \spc\spc 0, &\forall  h=1,\ldots,|S^*|.\nonumber
%\end{array}
%\end{equation*}
\begin{equation*}\label{LPw}
\begin{array}{crl}
\tag{general}  \mathrm{min}&  c^T \w & \nonumber \\
\mathrm{s.t.  } &A \w = &d \\
&\w \geq & 0. \nonumber
\end{array}
\end{equation*}

%Note the dimensions of $A$: the number of rows is $\sum_{i=1}^n |P_i|$, while the number of columns of $A$, and the number of variables in LP (\ref{LPw}), is $\prod_{i=1}^n |P_i|$. Thus, t
The number of constraints $\sum_{i=1}^n |P_i|$ scales linearly, equal to the total number of support points in the input measures. Meanwhile, the number of variables $\prod_{i=1}^n |P_i|$ scales exponentially in the number of input measures. This extreme difference in the scaling of number of rows and columns further motivates our interest in a column generation approach. %However, there will be some challenges due to the structure of matrix $A$.

%; for the small example of Figure \ref{fig:measwithbary}, LP (\ref{LPw}) has 9 constraints, but already 27 variables.
% Furthermore, we know that in an optimal solution, at most $\sum_{i=1}^n |P_i| -n+1$ columns are in the basis \cite{abm-16}; for the example barycenter the bound is 7 barycenter support points, while actually requiring 4, out of the possible 27. This beneficial structure, in low number of rows and low number of basis variables, is one of the prime settings for column generation. 
%Thus this strategy both removes a significant number of columns in the initial problem, and has the potential to not require nearly as many columns to find the optimal solution. %careful: not working on initial problem

%Bounded polyhedron is a polytope, can only take on values between 0 and 1

%This is the purpose of the pricing problem: to generate meaningful columns for introduction to LP (\ref{LPRM}). %The current optimum of LP (\ref{LPRM}) -- specifically, the dual solution -- is used in the objective of the pricing problem to find new columns that may lead to a better optimum when included in LP (\ref{LPRM}).
%This indicates no remaining columns with reduced cost. 

\subsection{Outline}

In Section \ref{sec:price}, we develop two column generation algorithms and discuss how to exploit the structure of the problem for efficient pricing and a memory-efficient implementation. In Section \ref{sec:master}, we devise a simple greedy algorithm to find an initial feasible solution to start these algorithms. Section \ref{sec:comp} contains some computational experiments. The results demonstrate the practical advantages of the algorithms over direct computations using LP (\ref{LPw}), and exhibit that the different algorithms become a best choice depending on the input.  We finish with some concluding remarks in Section \ref{sec:conc}.

%We establish an efficiently solvable pricing problem for the restricted master problem in Section \ref{sec:price}. Starting a column generation process requires a solution to an initial feasible LP (\ref{LPRM}). To satisfy this requirement, a greedy algorithm which produces a vertex of $\{\w \in \R^d: A\w = d, \w \geq 0\}$ is presented in Section \ref{sec:master}. Section \ref{sec:comp} contains computational experiments demonstrating the practical advantages of our column generation algorithms over direct computations using LP (\ref{LPw}).

% In each column of $A$, there is only one nonzero entry for each $i$, so $A$ is sparse. Its predictable structure does enable the efficient generation of a column given a particular index $h$, which is also key for the column generation strategy. However, $A$ lacks block structure for straight-forward de-coupling.

\section{Column Generation}\label{sec:price}

We briefly recall the basics of a column generation strategy for solving a linear program; for additional details see for instance \cite{dl-05}. Column generation is the process of dynamically adding variables to any linear program, and begins with a version of the linear program containing a small subset of the variables. This (reduced) linear program is called the \textit{master problem}. Additional variables are chosen using a sub-problem, which we call the \textit{pricing problem}, in which the current optimum of the restricted master problem is used to produce a new, potentially improving column. Column generation terminates when the optimal value of the pricing problem is no longer negative.

The pricing problem must contain enough information for its solution to be meaningful, but also remain sufficiently simple to be efficiently solvable. In our first column generation algorithm, we apply column generation naturally to LP (\ref{LPw}), described in Section \ref{sec:reducedcosts}. In our second column generation algorithm, we apply column generation to an alternative linear program, described in Section \ref{sec:LPDW}. We produce an efficiently solvable pricing problem for the alternative LP by exploiting the structure of the constraint matrix; this structure is described in Section \ref{sec:coeffstruc}.

\subsection{Column Generation for LP (\ref{LPw})}\label{sec:reducedcosts}

We first devise a strategy to apply column generation directly to LP (\ref{LPw}). At any iteration of the column generation process, the master problem contains variables which correspond to a subset of the possible combinations of support points $S^*$, and because the measures are assumed to be in general position, each variable corresponds to a unique point in the set of all possible support points $S$. Thus an improving column produced by column generation is precisely a new support point to include in $S$, and column generation for LP (\ref{LPw}) corresponds directly to the stated goal of the dynamic generation of $S$. 

We produce a pricing problem as follows. Let $\y$ be the vector containing the dual values associated with the mass transport constraints of any master problem based on LP (\ref{LPw}). Then a pricing problem using the vector of reduced costs $c - \y^T A$ is:
\begin{equation}\label{eqn:reducedcost}
\min_h \{ c_h - \y^TA_h\} 
\end{equation}
%\begin{equation*}\label{natprice}
%\begin{array}{crl}
%\mathrm{min} &(c^T-&\y^TA) \p  \nonumber \\
%&\p &\geq 0.
%\end{array}
%\end{equation*}
where $h = 1, \ldots, \prod_{i=1}^n |P_i|$ and $A_h$ is the $h^{th}$ column of $A$. Since the vectors $\y$ and $A_h$ contain $\sum_{i=1}^n |P_i|$ elements, each individual evaluation of $c_h - \y^T A_h$ is efficient, even if $c_h$ has not yet been computed. For instance, as in \cite{bp-18}, $c_h$ can be computed directly and efficiently from the combination $s_h = (\x_1^h, \ldots, \x_n^h) \in S^*$ using
\begin{equation*}\label{eqn:newcost} c_h = \sum_{i=1}^{n-1} \lambda_i \sum_{k=i+1}^n \lambda_k ||\x_k^h - \x_i^h||^2.\end{equation*} 
Therefore the exponential scaling of $c$ with the number of measures $n$ is the sole source of inefficiency for using Equation (\ref{eqn:reducedcost}) in column generation. The structure of $A$, described in Section \ref{sec:coeffstruc}, allows a memory-efficient evaluation of the product $\y^T A_h$.

Equation (\ref{eqn:reducedcost}) produces {\em one} column to introduce to the master problem each iteration of column generation: the variable with index $h$ where $c_h-\y^T A_h$ is minimal. To reduce the number of times the exponential vector must be processed, in our computational experiments, we also consider two alternative pricing problems: one where we introduce {\em all} columns $h$ where $c_h -y^T A_h$ is negative, and one where we introduce the best $n$ columns to the master problem at each iteration. 

\subsection{Dantzig-Wolfe Reformulation}\label{sec:LPDW}

In this section, we present a linear program based on the convex hull of vertices of a polyhedron. If the polyhedron is generated by the constraints of LP (\ref{LPw}), that is, $\{\w \in \R^d: A\w = d, \w \geq 0\}$, then the linear program is an alternative to LP (\ref{LPw}). Vertex-form linear programs are not generally considered for computation, since the majority of such polyhedra have exponentially many vertices. However, for general position measures, LP (\ref{LPw}) already scales exponentially, so computations using a vertex formulation face the same challenges, and as we will see, have the same potential benefits from column generation with a low number of constraints and large number of variables.

In our second column generation algorithm, in addition to reformulating LP (\ref{LPw}) for a new vertex-form master problem, we also perform a decomposition of the constraints. A decomposition of a vertex-form linear program is called a {\em Dantzig-Wolfe reformulation}, presented in \cite{dw-60}. The decomposition begins by partitioning the constraint matrix as $A = \begin{bmatrix} A_p \\ A_m \\ \end{bmatrix}$ and right-hand side $d = \begin{pmatrix} d_p \\ d_m \\ \end{pmatrix}$. A preselected number of rows are assigned to the matrix $A_p$ for use in the separate pricing problem. Note that reordering the rows of $A\w = d$ does not affect the underlying polytope, so the matrix $A_p$ does not need to be precisely the first rows of $A$; however, in our analysis in Section \ref{sec:coeffstruc} we will assume that the measures have been ordered such that those chosen for the pricing problem are first. The remaining rows of $A$ are assigned to the matrix $A_m$ and remain in the master problem. 

The pricing problem produces vectors $\p$ that are vertices of the polyhedron $\{\p \in \R^d: A_p\p = d_p, \p \geq 0\}$). These vectors represent potential distributions of mass to each possible combination in $S^*$, but are typically not (individually) feasible for the full problem $A\w = d$. They are added to the master problem through the products $c^T \p$ and $A_m \p$ in the objective and constraints, respectively. These products for all produced $\p$ are then combined in a convex combination with weights in the new variable vector $\mu$ to produce fully feasible solutions. The resulting master problem has both a limited number $J$ of variables due to the column generation, and a slightly reduced number of constraints due to the decomposition, and is now called the {\em restricted master problem}. 

\begin{equation*}\label{LPRM}
\begin{array}{crl}
\tag{RM}  \mathrm{min}  & \sum\limits_{j=1}^{J} (c^T\p_j)\mu_j \nonumber \\
\mathrm{s.t.}&\sum\limits_{j=1}^{J} (A_m \p_j) \mu_j  &= d_m \\
&\sum\limits_{j=1}^{J} \mu_j &=  1 \\
&\mu_j  &\geq  0, \forall  j=1,\ldots,J\nonumber
\end{array}
\end{equation*}

We confirm that the structure that makes LP (\ref{LPw}) a prime candidate for column generation is preserved in LP (\ref{LPRM}): the number of constraints is bounded above by $\sum_{i=1}^n |P_i|+1$, as LP (\ref{LPRM}) has just one additional constraint for convexity and a (possibly improper) subset of the rows. Ideally, only a fraction of the total number of vertices are used in LP (\ref{LPRM}), so that the number of columns remains low, as well. 

%An efficiently solvable pricing problem is key to successful column generation. Next, we exploit two beneficial aspects of the discrete barycenter problem which lead to an efficient strategy: the structure of the coefficient matrix $A$, and the structure of the pricing problem that results from the choice of exactly two measures for $A_p$. 

Recall that the pricing problem uses the current optimum of the restricted master problem to produce a new column to introduce to LP (\ref{LPRM}). Specifically, the objective function of the pricing problem requires the dual solution to LP (\ref{LPRM}), where $\y$ was the dual solution corresponding to the constraints $A\w=d$ in LP (\ref{LPw}). We will now denote the dual solution to $(\ref{LPRM})$ by $(\y_m, \sigma)$, where $\y_m$ contains the dual values associated with the mass transport constraints $A_m \p = d_m$, and $\sigma \in \R$ is the dual value associated with the convexity constraint in LP (\ref{LPRM}). Then the base form of the pricing problem is:
\begin{equation*}\label{LPprice}
\begin{array}{crl}
\tag{price}  \mathrm{min} &(c^T-&\y_m^TA_m) \p - \sigma \nonumber \\
\mathrm{s.t.} & A_p \p  &=  d_p \\
&\p &\geq 0.
\end{array}
\end{equation*}

LP (\ref{LPprice}) is still an exponential-sized linear program: The constraint matrix $A_p$ has an exponential number of columns, as does the matrix $A_m$, and the cost vector $c$ has an exponential number of elements. In fact, LP (\ref{LPprice}) contains the same number of variables as LP (\ref{LPw}). We now develop an improved pricing problem using information specific to the barycenter problem.%Reducing these exponential quantities to achieve an efficient version of the pricing problem is necessary for column generation. %However, due to the structure of the barycenter problem, $A_p$ has many duplicate columns. 

\subsection{The Structure of the Coefficient Matrix $A$}\label{sec:coeffstruc} %We examine the structure of $A_m$ and $A_p$ of LP (\ref{LPprice}). 
Recall that $A$ contains only elements $1$ and $0$: in column $h$, there is a $1$ when $\x_{i}$ is in the tuple $s_h$, that is, $\x^h_i = \x_i$, and $0$ otherwise. In fact, each column contains exactly $n$ nonzero coefficients. The pattern created within the matrix $A$ is displayed in Example \ref{ex:A}: each row has consecutive ones alternating with consecutive zeros. For each measure, the consecutive ones start in the first column for the first constraint in each measure, then start in the second row immediately after the end of the previous consecutive ones, continuing to the last constraint of the measure, forming a block. The width of the block depends on the measure $P_i$ with which the constraints are associated. The number of consecutive ones equals the product of the sizes of the measures with a higher index: the rows of $A$ associated with $P_i$,  $1 \leq i < n$, contain $\prod_{l=i+1}^n |P_l|$ consecutive ones. The block for the final measure is the identity matrix. 

\begin{example}\label{ex:A}

The matrix $A$ for four measures with sizes $|P_1| = |P_3| = 2$ and $|P_2| = |P_4|= 3$ contains blocks of ones and zeros. The width of block structure for particular constraints depends on the index $i$ of the corresponding measure $P_i$. Here there are 36 total columns, and the number of consecutive ones for each measure is 18, 6, 3, and 1, respectively. 
\setcounter{MaxMatrixCols}{50} 
\begin{center}
\begin{tabular}{cc}
$ A=  
\begin{bmatrix}
1 & 1 & 1 & 1 & 1 & 1& 1 & 1 & 1 & 1 & 1 & 1 & 1 & 1 & 1 & 1 & 1 & 1 & 0 & 0 & 0 & 0 & 0 & 0 & 0 & 0 & 0 & 0 & 0 & 0 & 0 & 0 & 0 & 0 &0 &0 \\
0 & 0 & 0 & 0 & 0 & 0 & 0 & 0 & 0 & 0 & 0 & 0 & 0 & 0 & 0 & 0 &0 &0 &1 & 1 & 1 & 1 & 1 & 1& 1 & 1 & 1 & 1 & 1 & 1 & 1 & 1 & 1 & 1 & 1 & 1  \\ \hline
1 & 1 & 1 & 1 & 1 & 1 & 0 & 0 & 0 & 0 & 0 & 0 & 0 & 0 & 0 & 0 & 0 & 0 &1 & 1 & 1 & 1 & 1 & 1 & 0 & 0 & 0 & 0 & 0 & 0 & 0 & 0 & 0 & 0 & 0 & 0 \\ 
0 & 0 & 0 & 0 & 0 & 0 & 1 & 1 & 1 & 1 & 1 & 1& 0 & 0 & 0 & 0 & 0 & 0 & 0 & 0 & 0 & 0 & 0 & 0 & 1 & 1 & 1 & 1 & 1 & 1& 0 & 0 & 0 & 0 & 0 & 0 \\ 
0 & 0 & 0 & 0 & 0 & 0 &0 & 0 & 0 & 0 & 0 & 0 &1 & 1 & 1 & 1 & 1 & 1& 0 & 0 & 0 & 0 & 0 & 0 &0 & 0 & 0 & 0 & 0 & 0 &1 & 1 & 1 & 1 & 1 & 1 \\ \hline
1 & 1 & 1 & 0 & 0 & 0 & 1 & 1 & 1 &0 & 0 & 0 &1 & 1 & 1 &0 & 0 & 0 &1 & 1 & 1 & 0 & 0 & 0 & 1 & 1 & 1 &0 & 0 & 0 &1 & 1 & 1 &0 & 0 & 0  \\
0 & 0 & 0 & 1 & 1 & 1 &0 & 0 & 0 & 1 & 1 & 1 &0 & 0 & 0 & 1 & 1 & 1 & 0 & 0 & 0 & 1 & 1 & 1 &0 & 0 & 0 & 1 & 1 & 1 &0 & 0 & 0 & 1 & 1 & 1 \\ \hline
1 & 0 & 0 & 1 & 0 & 0 & 1& 0 & 0 & 1 & 0 & 0 & 1& 0 & 0 & 1 & 0 & 0 & 1 & 0 & 0 & 1 & 0 & 0 & 1& 0 & 0 & 1 & 0 & 0 & 1& 0 & 0 & 1 & 0 & 0 \\
0 & 1 & 0 & 0 & 1 & 0 & 0 & 1 & 0 & 0 & 1 & 0& 0 & 1 & 0& 0 & 1 & 0 & 0 & 1 & 0 & 0 & 1 & 0 & 0 & 1 & 0 & 0 & 1 & 0& 0 & 1 & 0& 0 & 1 & 0  \\
0 & 0 & 1 & 0 & 0 & 1 & 0 & 0 & 1 & 0 & 0 & 1& 0 & 0 & 1& 0 & 0 & 1 & 0 & 0 & 1 & 0 & 0 & 1 & 0 & 0 & 1 & 0 & 0 & 1& 0 & 0 & 1& 0 & 0 & 1 \\
\end{bmatrix}$ & $\begin{matrix}\multirow{2}{*}{$P_1$} \\ \\ \multirow{3}{*}{$P_2$}  \\ \\ \\ \multirow{2}{*}{$P_3$} \\ \\ \multirow{3}{*}{$P_4$} \\ \\ \\ \end{matrix}$
\end{tabular} \\  
\end{center} \qed
\end{example}

Since the number of consecutive ones for each row of $A$ can be generated from the sizes of the support sets $|P_i|$, the columns of $A$ are easily generated solely from the problem input. Formulas for generating a given column $h$ are provided in Algorithm \ref{alg:ACol}. While Algorithm \ref{alg:ACol} is written with the sums and products in their respective formulas, recalculating these values on repeated runs can be avoided by storing the number of consecutive ones and the width of a full block for each measure.

Because $A$ is easily generated, the matrix $A_m$ is {\em not required in memory}. Instead, in the objective function, $\y_m^TA_m$ is calculated using the $|P_i|$ to determine which dual values should be added.  For further computational efficiency, updates to elements of $(c^T-\y_m^T A_m)$ are only required for those values of $\y_m$ which have changed from the previous iteration; due to the sparse nature of $A_m$, many elements may remain unchanged. 

\begin{algorithm}[h]
\caption{Generation of Column $h$}\label{alg:ACol}
\begin{algorithmic}[1]
\Statex Input:\begin{itemize} \item Column Index $h$, assuming the index of the first column is $0$  \item $|P_i|$ for $i = 1, \ldots, n$  \end{itemize}
\Statex Output: Column $h$ of matrix $A$, denoted $A_h$
\State Let $A_h$ be a column of zeros with length $(\sum_{i=1}^n |P_i|)$
\State $j = \lfloor \frac{h}{\prod_{l=2}^n |P_l|} \rfloor$
\State $A_h(j) = 1$
\For {$i=2, \ldots, n-1$}
\State$j = \sum_{l =1}^{i-1} |P_l| + \lfloor \frac{h \pmod{ \prod_{l=i}^n |P_l|}}{\prod_{l=i+1}^n |P_l|} \rfloor$
\State $A_h(j) = 1$
\EndFor
\State $j = \sum_{l =1}^{n-1} |P_l| + h \pmod{|P_n|}$
\State $A_h(j) = 1$
\end{algorithmic}
\end{algorithm}

%The coefficient matrix $A_p$ has to be stored in memory, though it remains unchanged through the process so it is only created once. However, 
By taking $A_p$ as the first rows of $A$, the pattern of consecutive ones also guarantees that $A_p$ always has {\em many duplicate columns.} Continuing with the matrix $A$ from Example \ref{ex:A}, in Example \ref{ex:A2}, we assign the constraints for the first two measures to $A_p$, resulting in a matrix with six unique columns, each repeated six times. For any number of measures $n$, partitioning the constraints for $k$ measures, $1 \leq k < n$, to $A_p$ results in  $n_u = \prod_{i=1}^k |P_i|$ unique columns, while the number of times each column is duplicated is $n_d = \prod_{i=k+1}^n |P_i|$. For a fixed $k$, the number of unique columns is no longer exponential; we justify the choice $k=2$ momentarily.  

\begin{example}\label{ex:A2}

Using the matrix $A$ from Example \ref{ex:A}, a decomposition of all constraints associated with the first two measures into the pricing problem gives this matrix $A_p$.

\begin{center}
\begin{tabular}{cc}
$A_p = \begin{bmatrix}
1 & 1 & 1 & 1 & 1 & 1& 1 & 1 & 1 & 1 & 1 & 1 & 1 & 1 & 1 & 1 & 1 & 1 & 0 & 0 & 0 & 0 & 0 & 0 & 0 & 0 & 0 & 0 & 0 & 0 & 0 & 0 & 0 & 0 &0 &0 \\
0 & 0 & 0 & 0 & 0 & 0 & 0 & 0 & 0 & 0 & 0 & 0 & 0 & 0 & 0 & 0 &0 &0 &1 & 1 & 1 & 1 & 1 & 1& 1 & 1 & 1 & 1 & 1 & 1 & 1 & 1 & 1 & 1 & 1 & 1  \\  \hline
1 & 1 & 1 & 1 & 1 & 1 & 0 & 0 & 0 & 0 & 0 & 0 & 0 & 0 & 0 & 0 & 0 & 0 &1 & 1 & 1 & 1 & 1 & 1 & 0 & 0 & 0 & 0 & 0 & 0 & 0 & 0 & 0 & 0 & 0 & 0 \\ 
0 & 0 & 0 & 0 & 0 & 0 & 1 & 1 & 1 & 1 & 1 & 1& 0 & 0 & 0 & 0 & 0 & 0 & 0 & 0 & 0 & 0 & 0 & 0 & 1 & 1 & 1 & 1 & 1 & 1& 0 & 0 & 0 & 0 & 0 & 0 \\ 
0 & 0 & 0 & 0 & 0 & 0 &0 & 0 & 0 & 0 & 0 & 0 &1 & 1 & 1 & 1 & 1 & 1& 0 & 0 & 0 & 0 & 0 & 0 &0 & 0 & 0 & 0 & 0 & 0 &1 & 1 & 1 & 1 & 1 & 1 \\ 
\end{bmatrix}$ & $\begin{matrix} \multirow{2}{*}{$P_1$} \\ \\ \multirow{3}{*}{$P_2$} \\ \\ \\ \end{matrix}$
\end{tabular}
\end{center}

Every column is repeated six times: $|P_3|\cdot|P_4|$. The matrix of unique columns is $U_p$.
\[U_p = 
\begin{bmatrix}
1 &1  & 1 & 0 & 0 & 0 \\
0 & 0 & 0 &1 & 1 & 1   \\ 
1 &  0&  0 &1  & 0 & 0 \\ 
 0 & 1 & 0 & 0 & 1 & 0 \\
0&  0 & 1 & 0 & 0 & 1  \\
\end{bmatrix} 
\]
\end{example}\qed

Replacing the constraint matrix $A_p$ in LP (\ref{LPprice}) with the matrix of unique columns $U_p$ requires a corresponding change to the objective function. Noting that LP (\ref{LPprice}) is the minimization of a linear objective, only the most negative coefficient for each unique column is required: in an optimal solution, all mass is assigned to such a column. Thus, it suffices to keep a best-cost vector $b$ for the unique columns. These two substitutions produce LP (\ref{Uprice}). %reduces the number of variables from $\prod_{i=1}^n |P_i|$ to $n_u, |P_1||P_2|$, a tremendous reduction. 

\begin{equation*}\label{Uprice}
\begin{array}{crl}
\tag{Uprice}  \mathrm{min} & b^T \textbf{q}& + \sigma \nonumber \\
\mathrm{s.t.} & U_p \textbf{q}  &=  d_p \\
&\textbf{q} &\geq 0.
\end{array}
\end{equation*} 

Using LP (\ref{Uprice}) improves solvability and memory requirements in two major ways: when $k=2$, LP (\ref{Uprice}) requires just $|P_1| \cdot |P_2|$ variables, a tremendous reduction from $\prod_{i=1}^n |P_i|$. The number of variables {\em does not depend on $n$}. When the input measures have support sets of equal size $|P|$, this eliminates $|P|^{n-2}$ variables.  Additionally, the constraint matrix is stored in memory, so the benefit of replacing $A_p$ with $U_p$ is significant. 

Using LP (\ref{Uprice}) instead of LP (\ref{LPprice}) requires   additional preprocessing each iteration to construct the best-cost vector $b$, which does use the exponential-sized vector $(c^T-\y^T A_m)$. The preprocessing for LP (\ref{Uprice}), repeated each iteration, is given in Algorithm \ref{alg:PriceSetup}. In particular, Algorithm \ref{alg:PriceSetup} highlights the important selection of which indices $h$ correspond to the unique columns used in LP (\ref{Uprice}).  

\begin{algorithm}[h]
\caption{Setup of LP (\ref{Uprice})}\label{alg:PriceSetup}
\begin{algorithmic}
\Statex Intialize the vector of indices I of length $n_u$
\State Update $a = c^T-y_m^T A_m$
\For {$j=1,\ldots, n_u$}
\For {$h = 1+n_d \cdot (j-1), \ldots, n_d \cdot j$}
\If {$h = 1+n_d \cdot (j-1)$}
\State $b_j =a_h$
\State $I(j) = h$
\ElsIf {$a_h < b_j$}
\State $b_j = a_h$
\State $I(j) = h$
\EndIf
\EndFor 
\EndFor
\State Update objective of LP (\ref{Uprice})
\end{algorithmic}
\end{algorithm}

\subsection{Decomposition of Constraints for Exactly Two Measures}\label{sec:decomp} As in Example \ref{ex:A2}, we partition $A$ with $k=2$; that is, we always partition $A$ where $A_p$, and subsequently the matrix of unique columns $U_p$, contains all rows of constraints associated with exactly two measures. LP (\ref{Uprice}) has a linear objective function $b^T \textbf{q} + \sigma$; because of the linear objective and the structure of the constraints for two measures, LP (\ref{Uprice}) is a classical transportation problem \cite{ff-56b,m-16}, a special case of a minimum-cost flow problem. Therefore, LP (\ref{Uprice}) can be solved in strongly polynomial time \cite{abm-16,bp-18}.  

\begin{theorem} Let $U_p \textbf{q} = d_p$ be the constraints associated with exactly two measures. Then LP (\ref{Uprice})  is a classical transportation problem and can be solved in strongly polynomial time. \end{theorem}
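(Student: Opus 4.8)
The plan is to recognize the constraint matrix $U_p$ as the node-arc incidence matrix of a complete bipartite graph, and then to read off the supply and demand data of a classical transportation problem directly from the point masses of the two selected measures.

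First I would fix the two measures assigned to $A_p$; without loss of generality take them to be $P_1$ and $P_2$. As established in the description of the coefficient matrix $A$, every column of $A$ contains exactly one $1$ among the $|P_1|$ rows belonging to $P_1$ and exactly one $1$ among the $|P_2|$ rows belonging to $P_2$, since a column encodes which single support point of each measure appears in its combination. Restricting to the rows of these two blocks and then passing to the matrix of unique columns $U_p$, the columns are in bijection with the pairs $(\x_1,\x_2)$ with $\x_1 \in \supp(P_1)$ and $\x_2 \in \supp(P_2)$, and the column indexed by such a pair carries a single $1$ in the $P_1$-row for $\x_1$, a single $1$ in the $P_2$-row for $\x_2$, and zeros elsewhere; this is exactly the pattern visible in $U_p$ in Example \ref{ex:A2}. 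Such a $0/1$ matrix is precisely the node-arc incidence matrix of the complete bipartite graph on node sets $\supp(P_1)$ and $\supp(P_2)$, where each column is the edge joining one support point of $P_1$ to one of $P_2$.

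Next I would read the system $U_p \textbf{q} = d_p$ row by row, writing the variable as $q_{\x_1,\x_2}$. The row for $\x_1$ enforces $\sum_{\x_2 \in \supp(P_2)} q_{\x_1,\x_2} = d_{\x_1}$ and the row for $\x_2$ enforces $\sum_{\x_1 \in \supp(P_1)} q_{\x_1,\x_2} = d_{\x_2}$; interpreting $q_{\x_1,\x_2}$ as the mass shipped along the edge $(\x_1,\x_2)$, these are exactly the supply constraints at the sources $\supp(P_1)$ and the demand constraints at the sinks $\supp(P_2)$, with supplies and demands equal to the point masses of $P_1$ and $P_2$. Since the objective $b^T\textbf{q} + \sigma$ is linear in $\textbf{q}$ and $\sigma$ is an additive constant that does not affect the minimizer, LP (\ref{Uprice}) is a classical transportation problem. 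I would additionally note that the instance is balanced, hence feasible, because both the total supply $\sum_{\x_1} d_{\x_1}$ and the total demand $\sum_{\x_2} d_{\x_2}$ equal $1$, as $P_1$ and $P_2$ are probability measures.

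Finally, strongly polynomial solvability follows by invoking the standard fact that a transportation problem is a special case of minimum-cost flow on the associated bipartite network, for which strongly polynomial algorithms are known \cite{ff-56b,m-16}; this is the same regime already used for two-marginal transport in the barycenter setting \cite{abm-16,bp-18}. The only substantive step is the structural identification in the first two paragraphs, namely confirming that restricting to exactly two measures leaves every unique column with precisely one $1$ in each of the two blocks. Once that pattern is verified, the match with the supply/demand form of a transportation problem and the balance check are immediate, so I expect no genuine technical obstacle beyond this bookkeeping.
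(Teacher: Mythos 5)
Your proposal is correct and follows essentially the same route as the paper, which proves the theorem by exactly this observation: the unique columns of $U_p$ have one $1$ in each of the two measures' row blocks, so LP (\ref{Uprice}) with its linear objective is a classical (balanced, bipartite) transportation problem, hence solvable in strongly polynomial time via standard minimum-cost flow results. Your write-up simply makes explicit the bookkeeping (incidence-matrix identification, row-by-row supply/demand reading, balance via total mass one, and the irrelevance of the constant $\sigma$) that the paper leaves implicit in the surrounding discussion.
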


We conclude this discussion with an examination of the efficiency of adding a column produced by LP (\ref{Uprice}) to LP (\ref{LPRM}). Once the column generation process has begun, the previous pricing problem LP (\ref{Uprice}) produces a solution $\textbf{q}$ containing the nonzero elements of a new $\p_J$ to be introduced to LP (\ref{LPRM}). This $\textbf{q}$ has, trivially, at most $|P_1| \cdot |P_2|$ nonzero elements, and in fact, there must exist a smaller solution of size $|P_1| + |P_2|-1$. Recall from Section \ref{sec:price} that $A_m$ is easily generated, so the pricing problem does not require $A_m$ to be stored in memory. The restricted master problem also does not require $A_m$ to be stored; instead, Algorithm \ref{alg:ACol} is used to calculate the new column $A_m \p_J$. Combined with the small number of nonzero elements of $\p_J$, a computation of $A_m \p_J$, as well as of $c^T \p_J$, can be done efficiently. For additional efficiency, the solver for LP (\ref{LPRM}) uses the previous solution as a warm start. Using the primal simplex method then typically finds a new optimal solution in just a few simplex steps for each update of LP (\ref{LPRM}). 

Next, we turn to the master problem and describe a method for generating an initial feasible start for both column generation algorithms. 

\section{Constructing a Feasible Solution}\label{sec:master}

%The process of column generation begins with a master problem, restated here for reference. 
%\begin{equation*}\label{LPRM}
%\begin{array}{crl}
%\tag{RM}  \mathrm{min}  & \sum\limits_{j=1}^{J} (c^T\p_j)\mu_j \nonumber \\
%\mathrm{s.t.}&\sum\limits_{j=1}^{J} (A_m \p_j) \mu_j  &= d_m \\
%&\sum\limits_{j=1}^{J} \mu_j &=  1 \\
%&\mu_j  &\geq  0, \forall  j=1,\ldots,J\nonumber
%\end{array}
%\end{equation*}
%By design, a master problem starts with a low number of variables, then additional columns are introduced iteratively. The size of the master problem, whether LP (\ref{LPRM}) or based on LP (\ref{LPw}), increases linearly with the number of columns generated. 
To initialize column generation, both algorithms require enough variables such that an initial feasible solution exists, along with a feasible solution.

For LP (\ref{LPw}), a feasible solution is any $\w$ which solves the full system $A \w = d$, and a feasible master problem is produced by the variables associated with a positive value in $\w$. A feasible solution for LP (\ref{LPRM}) is related to the feasible solution $\w$ as follows. First, consider the introduction of a single initial column (so $J$ begins at $1$). Then the convexity constraint requires $\mu_1 = 1$, and the remaining constraints subsequently require $A_m \p_1 = d_m$. %Any solution $\w$ to the full system $A \w = d$ contains a solution $\p$ to $A_m \p = d_m$.
Furthermore, all $\p$ need to satisfy $A_p \p = d_p$, and thus $\p_1$ should also be a solution to the full system $A \w = d$.
%For column generation using LP (\ref{LPRM}), we could generate a vertex of the polytope given by $A_m \p = d_m$ rather than of the full system. However, we choose to generate a vertex of $A \w = d$, regardless of algorithm, in our implementations. Doing so requires little additional computational effort, and along with the benefit of starting with a potential optimum, this allows all computational experiments to begin from the same vertex. 

We considered two methods for constructing a vertex: a greedy construction and the 2-approximation algorithm from \cite{b-17}. The appeal of the 2-approximation algorithm lies in its ability to efficiently provide a {\em good} potential optimum. However, we found in experiments that initialization with a 2-approximation vertex is consistently outperformed by initialization with a greedily constructed vertex (this was not due to increased time to generate the vertex but because additional iterations were required before strictly improving columns were found). Therefore, we present just the greedy construction algorithm, a generalization of the north-west corner rule.

The following algorithm greedily constructs a solution to $A \w = d$. The process begins by generating a combination $s_h = (\x_1^h, \x_2^h, \ldots, \x_n^h) \in S^*$. In the first step, each $\x_i^h$ has corresponding mass $d_i^h$, and the maximum mass that can be assigned to $s_h$ without violating the non-mass-splitting property is the minimum mass among $d_1^h, \ldots, d_n^h$. That minimum mass is placed at index $h$ in $w$, denoting that mass $w_h$ is transported to $\x_1^h, \ldots, \x_n^h$ from the optimal location for such mass assignment (the corresponding weighted mean $\x^h$). The algorithm than computes the remaining mass $\mathbb{d}$ each of the points $\x_1^h, \ldots, \x_n^h$ still needs to receive full mass $d_i^h$. Then the combination is updated; for each measure, if the current support point has not yet received full mass ($\mathbb{d} > 0$), the support point remains in the combination. However, at least one measure's support point has been fully supplied by the greedy mass assignment; for these measures a new support point is chosen, guaranteeing a new combination. The process then repeats, assigning the minimum mass not yet received at each support point to new combinations until all mass has been supplied.%On later repetitions, after generating another combination $s_h$, the maximum mass that can be assigned without violating the non-mass-splitting property not yet supplied to the support points $\x_i^h$.  
%After marking the mass $w_h$ as supplied, the process is repeated, generating the next combination. 

This process is given in Algorithm \ref{alg:GreedyFeas}. Note that in the first step of each repeat of the algorithm, a combination of support points is formed before the corresponding index $h$. Therefore Algorithm \ref{alg:GreedyFeas} uses the double indexed notation $\x_{ij_i}$ as the $j_i^{th}$ support point in measure $P_i$ with corresponding mass $d_{ij_i}$, and computes the index $h$ for the combination. %The remaining mass required for each support point $\x_i^h$ is reduced by $w_h$, which guarantees the mass for at least one $\x_i^h$ is fully accounted for. A new combination is generated using the first support point of each $P_i$ that still requires mass, and the process repeats. 

\begin{algorithm}[h]
\caption{Greedy Construction of $\w$: $A\w = d$ }\label{alg:GreedyFeas}
\begin{algorithmic}[1]
\Statex Input: vector $n_o$ containing number of consecutive ones for each $i$
\Statex Output: vector $\w$
\Statex For each $P_i$, and for $j_i = 1, \ldots, |P_i|$, initialize $\mathbb{d}_{ij_i}= d_{ij_i}$ %the mass associated with support point $\x_{ij_i}$ %let $b_{ih} = d_{ih}$ where $d_{ih}$ is the mass associated with $\x_i$ in $s_h^*$
\Statex Let $L = 1$, $m_1 = 0$, $\w = \textbf{0}$, and $j_i = 1 \ \  \forall i = 1, \ldots, n$
%\Statex Let $h$ be the index of $s \in S^* $ such that $ s_h = \ (\x_{1j_i}, \ldots, \x_{nj_i})$
\While {$\sum_{l=1}^L m_{l} < 1$}
%\State calculate $\x^h = \sum_{i=1}^n \x_{i}^h$ %and $c_h = \sum_{i=1}^n ||\x^h-\x_{i}^h||^2$
%\State Let $h$ be the index of $s \in S^* $ such that $ s_h = \ (\x_{1j_i}, \ldots, \x_{nj_i})$
\State $m_{L} = \min\{\mathbb{d}_{ij_i}\}$ 
%\State Let $h$ be the index of $s \in S^* $ such that $ s_h = \ (\x_{1j_i}, \ldots, \x_{nj_i})$
\State $h = \sum_{i=1}^n ((j_i-1)n_o(i))$
\State $w_h = m_L$
\For {$i=1, \ldots, n$}
\State $\mathbb{d}_{ij_i} = \mathbb{d}_{ij_i}-m_{L}$
\If {$\mathbb{d}_{ij_i} = 0$}
\State $j_i = j_i +1$
%\Else
%\State $j_i = j_i $
\EndIf
\EndFor
\State $L = L+1$
\EndWhile
\end{algorithmic}
\end{algorithm}

\begin{theorem}\label{thm:GreedyEff} Let $P_1, \ldots, P_n$ be discrete probability measures. Then Algorithm \ref{alg:GreedyFeas} runs in $\mathcal{O}(n\sum_{i=1}^n |P_i| )$ in the arithmetic model of computation. \end{theorem}

\begin{proof} %Assume probability measures $P_1, \ldots, P_n$ have support sets of fixed size at most $|P|$.
First, we show that the number of nonzero elements produced by Algorithm \ref{alg:GreedyFeas}, which is also the number of repetitions of the outer loop of Algorithm \ref{alg:GreedyFeas}, is between $\max_{1 \leq i \leq n} \{|P_i|\}$ and  $\sum_{i=1}^n |P_i| -n+1.$ The lower bound, $\max_{1 \leq i \leq n} \{|P_i|\}$, is an immediate consequence of the non-mass-splitting property maintained by Algorithm \ref{alg:GreedyFeas}. For the upper bound, $\sum_{i=1}^n |P_i| -n+1$, note that the last iteration must fully supply the mass to $n$ points, one $\x_{i}$ from all $P_i$, because the total mass for each $P_i$ is the same (one). In each previous iteration, the minimum number of support points whose index $j_i$ changes is one, for a total of $\sum_{i=1}^n (|P_i|-1) +1 = \sum_{i=1}^n |P_i| -n+1$ iterations. 

Thus the outer loop runs in $\sum_{i=1}^n |P_i|$ time. Since each step inside the loop of Algorithm \ref{alg:GreedyFeas} requires at most linear-in-$n$ elementary operations, we obtain Theorem \ref{thm:GreedyEff}. \qed \end{proof}

\begin{corollary} For $n$ probability measures with support sets of size at most $|P|$, Algorithm \ref{alg:GreedyFeas} runs in $\mathcal{O}(n^2)$ in the arithmetic model of computation. \end{corollary}

\begin{proof}
Let $P_1, \ldots, P_n$ be discrete probability measures with a bound $|P|$ on the size of their support sets. Then $\sum_{i=1}^n |P_i| \leq n |P|$, and $\mathcal{O}(n\sum_{i=1}^n |P_i|  )$ becomes $\mathcal{O}(n^2 ).$ \qed 
\end{proof}

As an additional consequence of the iteration bound $\sum_{i=1}^n |P_i| -n+1$, the number of nonzero mass elements of $\w$ are bounded. Therefore the setup of the master problem LP (\ref{LPRM}) using a result produced by Algorithm \ref{alg:GreedyFeas} is efficient.

%Due to the linear nature of this bound in $n$, both $\w$ and the resulting LP (\ref{LPRM}) using $\p_1$ can be computed efficiently.
%the same as the bound on the required number of support points for the barycenter \cite{abm-16}, 

We now show that Algorithm \ref{alg:GreedyFeas} produces a vertex of the polytope generated by the constraints $A\w=d$. 

\begin{theorem} Algorithm \ref{alg:GreedyFeas} generates a vertex of the polytope $\{\w \in \R^d: A\w = d, \w \geq 0\}$. \end{theorem}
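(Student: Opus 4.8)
The plan is to verify the two defining properties of a vertex of a polyhedron in standard form: that $\w$ is feasible, and that the columns of $A$ indexed by the support of $\w$ are linearly independent, i.e.\ that $\w$ is a basic feasible solution of $\{\w \in \R^d: A\w = d, \w \geq 0\}$.

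Feasibility is the routine part. Nonnegativity holds because every assigned mass $w_h = m_L = \min\{b_{ij_i}\}$ is a minimum of nonnegative remaining masses, and each $b_{ij_i}$ stays nonnegative after the subtraction inside the loop. For $A\w = d$ I would argue row by row: the pointer $j_i$ is advanced past a support point $\x_{ij}$ precisely when the mass accumulated at $\x_{ij}$ reaches its prescribed value $b_{ij}$ (the corresponding entry of $d$), so the constraint $\sum_{h:\x_i^h=\x_{ij}} w_h = b_{ij}$ is met; since every $P_i$ has total mass one, the stopping condition $\sum_L m_L = 1$ is reached exactly when all pointers have traversed all support points, so every constraint is satisfied.

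The heart of the argument is linear independence, for which I would exploit the order in which the algorithm generates columns. List the selected combinations as $h_1, \ldots, h_{L_{\max}}$ in the order produced. The key structural observation is that at every step $L$ at least one pointer advances, so a support point is \emph{completed}, and a completed support point never reappears in a later combination. Hence, picking for each $L < L_{\max}$ a row $r_L$ corresponding to a point completed at step $L$, the entry of column $h_{L'}$ in row $r_L$ equals $1$ when $L'=L$ and $0$ for every $L' > L$; moreover that point occupies a consecutive block of steps ending at $L$. This yields a permuted lower-triangular pattern on the chosen columns. I would then close by forward induction on the hypothesis $\sum_L \alpha_L A_{h_L} = \mathbf{0}$: reading the equation in row $r_1$ isolates $\alpha_1 = 0$, since a first-step completion is some $\x_{i1}$ and thus lies only in column $h_1$; reading it in row $r_L$ reduces to $\alpha_L$ plus coefficients already shown to vanish, forcing $\alpha_L = 0$ for all $L < L_{\max}$. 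The leftover term $\alpha_{L_{\max}} A_{h_{L_{\max}}} = \mathbf{0}$ then gives $\alpha_{L_{\max}} = 0$ because that column is nonzero, and independence follows.

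The main obstacle I anticipate is treating the final column cleanly. The triangular rows $r_1, \ldots, r_{L_{\max}-1}$ on their own only certify independence of the first $L_{\max}-1$ columns, and the last step completes $n$ points simultaneously rather than one; I would need to confirm that the forward induction genuinely eliminates every earlier coefficient before invoking nonvanishing of the last column. I would also check that the argument is insensitive to degeneracy, that is, when the number of positive $w_h$ is strictly below the rank $\sum_{i=1}^n |P_i| - n + 1$ of $A$, so that fewer than a full basis of columns is selected yet independence, and hence the vertex property, still holds.
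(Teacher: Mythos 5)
Your proof is correct, but it takes a genuinely different route from the paper. The paper does not argue linear independence at all: it constructs an explicit cost vector $c$ under which $\w$ is the \emph{unique} optimal solution of $\min\{c^T\w : A\w = d,\ \w \geq 0\}$ --- assigning costs $1, 2, \ldots, L$ to the combinations in order of construction and a uniformly larger cost to all other columns --- and then argues that mass can be shifted neither forward (strict cost increase) nor backward (each $m_j$ already fully supplies some support point), so uniqueness of the optimizer certifies the vertex property. You instead verify the basic-feasible-solution characterization directly, and your key structural observation --- at every step at least one support point is completed (the minimum is attained), a completed point never reappears since the pointers $j_i$ only advance, and hence the rows $r_1, \ldots, r_{L_{\max}-1}$ exhibit a triangular pattern on the selected columns --- is sound; it is, in essence, the classical proof that the northwest corner rule for transportation problems yields a basic feasible solution, generalized to $n$ marginals. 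Both of your anticipated obstacles resolve cleanly: for the last column you may simply pick any one of the $n$ points completed at step $L_{\max}$ as row $r_{L_{\max}}$ (it carries entry $1$ in column $h_{L_{\max}}$ and all earlier coefficients vanish by the induction), so the separate nonvanishing argument is not even needed; and degeneracy is harmless because the vertex characterization requires only that the columns supporting $\w$ be linearly independent, not that they form a full basis --- any independent set extends to one. Comparing the two approaches: yours is arguably the more airtight, since the paper's uniqueness argument reasons about pairwise mass exchanges between combinations rather than arbitrary feasible redistributions and leaves that reduction implicit, whereas your linear-algebraic induction needs no such exchange argument; the paper's proof, in turn, buys a compact explicit certificate (the vector $c$) of vertex-hood and avoids bookkeeping about which rows to select.
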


\begin{proof}
Let $A$, $d$ be given and let $\w$ be generated using Algorithm \ref{alg:GreedyFeas}. We show there exists a $c$ such that $\w$ is the unique optimal solution to:
\begin{equation*}
\begin{array}{crl}
\mathrm{min}&  c^T \w & \nonumber \\
\mathrm{s.t.  } &A \w = &d \\
&\w \geq & 0. \nonumber
\end{array}
\end{equation*}
Let $M$ be the set of nonzero elements of $\w$, with size $|M| = L$. Order the elements of $M$ in order of construction by Algorithm \ref{alg:GreedyFeas}, $m_1, \ldots, m_L$. Also order the associated indices $h_1, \ldots, h_L$ as calculated by Algorithm \ref{alg:GreedyFeas}. 

First, we show that $\w$ is a feasible solution to the above system. For each support point $\x_{j_i}$ in each measure $P_i$, the current value $\mathbb{d}_{ij_i}$ is initialized as its full mass $\mathbb{d}_{ij_i}= d_{ij_i}$. The algorithm begins with a combination $h$ of support points from each measure, identifies the smallest mass $m_L$ among them (line $2$) and sets the mass for this combination $w_h$ to $w_h=m_L$ (line $3$ to get the correct index of the combination; line $4$ for the assignment). Then the current masses $\mathbb{d}_{ij_i}$ of all support points in the combination are reduced by $m_L$ (line $6$). The current mass of at least one of the support points must have dropped to $0$; then a new support point is picked from the respective measure (lines $7$ and $8$) and the process is repeated. To see why this yields a feasible solution, recall that the total mass in each measure $P_i$ is precisely $1$ and note that, in line $6$, the total current mass in each measure is dropped by the same value $m_L$. The algorithm runs until $\sum_{l=1}^L m_{l} = 1$ (line $1$), i.e., until the total mass of each support point in each measure is fully accounted for. This gives feasibility of $\w$.

Next, we construct a $c$ such that $\w$ is a unique optimal solution. Let $c_{h_1} = 1$, $c_{h_2} = 2$, \ldots, and $c_{h_L} = L$. Let all other $c_h$, those whose $h$-index is not in $h_1, \ldots, h_L$, be $\sum_{i=1}^n |P_i| -n+2$ (Recall: $|M|  \leq \sum_{i=1}^n |P_i| -n+1$). 

By construction, removing mass from a combination with a lower index and assigning it to a combination with higher index in $M$, that is, from $m_j$ to $m_k$ with $j \leq k$, will strictly increase the value of $c^T \w$. This includes moving mass to a combination with no mass in $\w$, that is, with an index not in $M$. 

So it suffices to show that mass cannot be reassigned from $m_k$ to $m_j$, $j \leq k$. The mass $m_j$ is chosen such that for at least one $x_{j_i}$, the mass $d_{j_i}$ has been fully supplied. Therefore $m_j$ cannot be increased without violating the constraints $A \w = d$.
  
Therefore $\w$ minimizes $c^T \w$ subject to $A \w = d$, since the maximum mass allowable is assigned to the cheapest costs. Furthermore, $\w$ does so uniquely, since any change in its elements will strictly increase the value of $c^T \w$ due to the construction of $c$. Therefore $\w$ is a vertex. \qed 

\end{proof} 

%Let $\p_1$ be the result of Algorithm \ref{alg:GreedyFeas}. We can now express the master problem as follows, using $J$ to represent the current iteration number. 
%
%\begin{equation*}\label{LPmDW}
%\begin{array}{crl}
%\tag{master}  \mathrm{min}  & \sum\limits_{j=1}^J (c^T\p_j)\mu_j \nonumber \\
%\mathrm{s.t.} &\sum\limits_{j=1}^J (A_m \p_j) \mu_j  &=  d_m \\
%&\sum\limits_{j=1}^J \mu_j &=  1 \\
%&\mu_j  &\geq  0, \spc\spc\spc\spc\spc\forall  j=1,\ldots,J\nonumber
%\end{array}
%\end{equation*}
\begin{figure}[t]
\begin{center}
\fbox{\includegraphics[scale=.45]{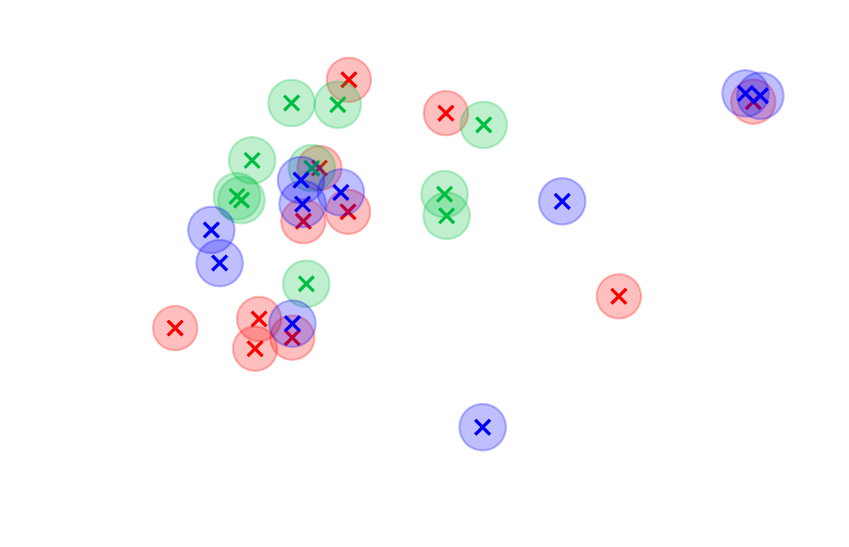}}
\fbox{\includegraphics[scale=.45]{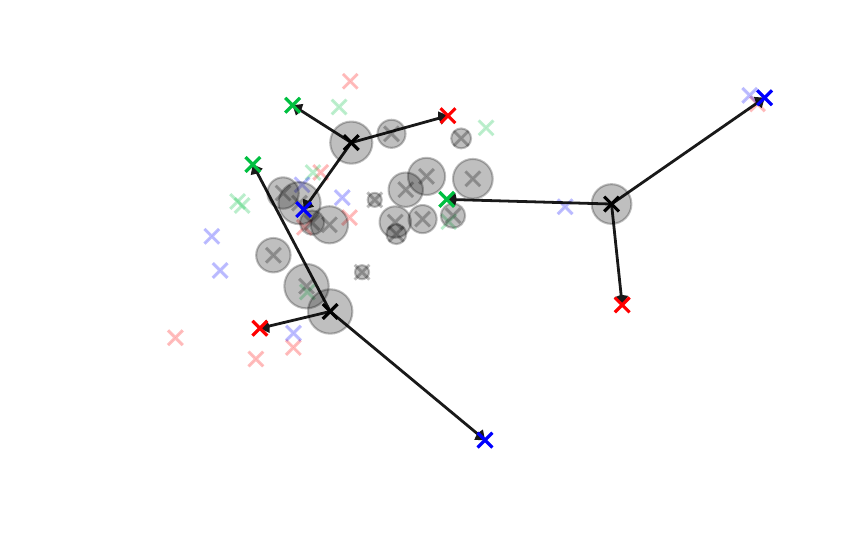}}
\end{center}
\caption{(left)  Three measures in general position with 10 or 11 support points and equally distributed mass. (right) A greedily constructed feasible solution. Transport from three sample points -- those constructed first, fifth, and seventeenth -- is shown (arrows). Each support point is the weighted mean of its three destination points. }\label{fig:GenPos}
\end{figure}

In Figure \ref{fig:GenPos} (left), we display an example with three measures, two with 10 support points and one with 11 support points. Each measure has equally distributed mass. Applying Algorithm \ref{alg:GreedyFeas} results in a feasible solution supported on 20 weighted means of varying mass, displayed in Figure \ref{fig:GenPos} (right), along with the transport for three sample points.

\section{Computations}\label{sec:comp}

The primary goal of these experiments is to demonstrate, for general position measures, the computational benefits of column generation algorithms over the full linear program. To this end, we construct measures from a real-world data set containing event locations given in longitude and latitude. Because the events occur without known structure, probability measures with these support points are in general position. The generated measures have varying numbers of support points with uniformly distributed mass, and the weight of each measure is inversely proportional to the number of support points. All computations have been run on a laptop (MacBook Pro, 2.4 GHz Intel Core i9, 32 GB of RAM, SSD). Data processing and the setup of the LPs were implemented in C\texttt{++} and the LPs were solved using Gurobi 8.0. The source code is available at \url{https://github.com/StephanPatterson/Barycenter-Formulations}. For a meaningful comparison, we set Gurobi to run without presolvers and using the same algorithm (primal simplex method) in all experiments.

We want comparisons to exact computations, which as previously discussed, are hard \cite{ab-21,bp-21a}. Even when the measures contain a small number of support points, LP (\ref{LPw}) may contain millions of variables. Therefore, the following analysis focuses primarily on measures with small support sets (2-12 support points per measure); the improved scaling on our column generation algorithms would allow for measures of more moderate size, but not orders of magnitude larger. Throughout this section, we use the number of variables in LP (\ref{LPw}) as a reference label for a particular instance. %To complement the analysis on measures with small support sets, we conclude with some experiments on column generation alone, increasing the size of the support sets.

The second goal of these experiments is to examine the practical behavior of variations in implementation. To this end, we compare three variants of column generation applied directly to LP (\ref{LPw}) and two versions using Dantzig-Wolfe decomposition. The two variants for the Dantzig-Wolfe reformulation differ only in the choice of which two measures are moved to the pricing problem. In the ``DW-L'' variant, the two measures with the largest number of support points are moved to the pricing problem, while the ``DW-A'' variant makes an arbitrary choice of two measures. 

The three variants for column generation directly on LP (\ref{LPw}) vary on the number of columns introduced per iteration; ``1-col'' refers to the standard column generation strategy of introducing the variable with the greatest reduced cost, thus introducing one variable per iteration. We have also included the strategy introducing all variables with improved cost, labeled ``all-col'', and a heuristic compromise between the strategies, introducing the best $n$ columns per iteration, labeled ``$n$-col''. We also considered, but ultimately discarded, a variant introducing the first $n$ columns each iteration; while this has the benefit of avoiding the processing of the full exponential-sized cost vector each iteration, several times more variables were introduced, leading to slower solution times and larger problem sizes than the best $n$ variant in all but one of our experiments. We believe this is due to the highly structured nature of $A$.

\begin{table}
\scriptsize
\begin{center}
\begin{tabular}{|c|c|c|c|c|c|c|c|c|c|c|c|c|}\hline%1 - arbitrary 2- biggest 3 - smallest
 &\multicolumn{2}{|c|}{ LP (\ref{LPw})} & \multicolumn{10}{|c|}{Column Generation } \\ \hline
&\multicolumn{2}{|c|}{ }& \multicolumn{2}{|c|}{1-col} &  \multicolumn{2}{|c|}{$n$-col } & \multicolumn{2}{|c|}{all-col}&\multicolumn{2}{|c|}{ DW-L}&\multicolumn{2}{|c|}{DW-A} \\ \hline
n & Var  &Time   & Var & Time & Var & Time & Var & Time & Var & Time & Var & Time \\ \hline
%12 & 55,296 & 0.08 & 114 & 0.32 & 268 & \textbf{0.07} & 28835 & 0.21 & 245 & 0.25 \\ \hline
12 & 2,177,280 & \textbf{5.22} & 270 &22.96 & 825 & 7.01 & 1,199,800 & 10.60 & 646 & 18.78&478&14.67 \\ \hline
14 & 4,976,640 & 13.68 & 234& 50.42  & 634 & \textbf{11.73} & 2,688,032 & 35.39 & 581 & 39.70&638&41.35  \\ \hline
14 & 5,971,968 & 56.68 & 232 & 60.31 & 669 & \textbf{14.51} & 3,272,679 & 29.11 & 684 & 56.50 & 422 & 49.91 \\ \hline
12 & 25,288,704 & 235.17 & 322 & 354.28 & 903 & \textbf{90.70} & 12,727,161& 344.33 & 489 & 162.41 & 625 & 218.18 \\ \hline
14 & 28,449,792 & 358.84 & 308 & 423.30 & 1,004 & \textbf{109.97} & 14,572,552 & 215.50 & 803 & 302.01& 545 & 252.43 \\ \hline
15 & 31,850,496 & 378.90 &  309 & 480.10 & 961 & \textbf{106.10} & 17,992,481 & 258.98 & 1,604 & 689.25 & 779 & 303.90 \\ \hline
17 & 63,700,992 & 1754.08 & 364 & 1,559.79 & 1,395 & \textbf{404.12}& 33,848,981 & 1,129.83 & 2,403 & 3,174.96 & 2,081 & 1,980.67 \\ \hline
17 & 84,934,656 & 1858.22 & 348 & 2,225.41 & 1,209& \textbf{364.50} & 44,303,632& 1,945.65 & 2,023 & 2,882.76 & 1,209 & 1,538.60 \\ \hline
18 & 127,401,984 & 3588.32 & 386 & 3,145.88 & 1,189& \textbf{699.37} & 62,383,222 & 2,467.41 & 2,121 & 4,402.53 & 2,924 & 5,105.93\\ \hline
17 & 148,635,648 & * & 341 & 3,498.02 & 1,155 & \textbf{844.44} & 83,870,587& 4,001.37 & 1,119 & 3,204.85 & 724 & 1,621.57 \\ \hline
18 & 191,102,976 & * & 364 & 3,507.80 & 1,376& \textbf{807.66} & 100,681,949& 10,015.66 & 2,019 & 5,588.05 & 899 & 4,477.56 \\ \hline
\end{tabular}
\end{center}
\caption{Comparison of column generation algorithms for $n$ measures per experiment, including the number of variables (Var) introduced by each algorithm. Times, including setup, are given in seconds with fastest times in bold.  Each measure has a small number (between 2 and 12) of points in general position. For larger instances, a direct solution was not possible due to memory limitations (*).}\label{tab:initcompare}
\end{table}

The total running times for these experiments are shown in Table \ref{tab:initcompare}. All of the column generation algorithms are able to find solutions to experiments for which LP (\ref{LPw}) is too large for the laptop (*). The classic column generation algorithm, 1-col, typically does not show an improvement in solving speed over a direct computation using LP (\ref{LPw}) for these experiments, which was our motivation for considering the other, heuristic strategies for introducing columns. The Dantzig-Wolfe reformulation algorithms, DW-L and DW-A, usually show minor improvements over LP (\ref{LPw}), though one variant does not reliably outperform the other. 
The fastest run times consistently come from the approach that introduces the best $n$ columns per iteration. The all-columns approach also typically outperforms a direct solve; the size of the problem is approximately half of the full linear program and the algorithm completes in a handful of iterations (at most 4). 

All column generation algorithms dramatically reduce the number of variables introduced, which results in significantly lower memory requirements. The maximum memory used during the execution of each experiment is shown in Table \ref{tab:memcomp}; the Dantzig-Wolfe reformulation is the most memory efficient algorithm  due to its compact restricted master problem LP (\ref{LPRM}) and condensed pricing problem LP (\ref{Uprice}).

\begin{table}[]
    \centering
    \begin{tabular}{|c|c|c|c|c|c|c|} \hline
         & LP (\ref{LPw}) &1-col & $n$-col & all-col & DW-L & DW-A  \\ \hline
%        55,296 &87& 8.1 & \textbf{7.7} & 38.5 & 8.8 \\ \hline
        2,177,280 &2,380& 42 & 44 & 1,330 & 33 & \textbf{30} \\ \hline
        4,976,640 &6,080& 85 & 86 & 1,530 & 63 & \textbf{56} \\ \hline
        5,971,968 &7,360& 100 & 102 & 2,820 & \textbf{70} & 72 \\ \hline
        25,288,704 &28,210& 398 & 399 & 12,690 & \textbf{212} & 223 \\ \hline
        28,449,792 &36,040& 447 & 450 & 10,720 & \textbf{244} & 267 \\ \hline
        31,850,496 &42,820& 499 & 501 & 20,790 & 286 & \textbf{274}\\ \hline
        63,700,992 & 94,310 & 990 & 995 & 28,810 & \textbf{542} & 564\\ \hline
        84,934,656 & 125,740 & 1,290 & 1,290 & 57,220 & \textbf{691} & 751 \\ \hline
        127,401,984 & 199,450 & 1,920 & 1,930 & 84,130 & \textbf{1,020} & 1,060\\ \hline
        148,635,648 & * & 2,240 &  2,250 & 67,200 & \textbf{1,150} & 1,260 \\ \hline
        191,102,976 & * & 2,880 & 2,880 & 86,810 & \textbf{1,520} & 1,920 \\ \hline
    \end{tabular}
    \caption{Maximum memory used by column generation algorithms for each instance from Table \ref{tab:initcompare}, given in MB. }
    \label{tab:memcomp}
\end{table}

Since the fastest running times came from column generation on LP (\ref{LPw}), but the best memory efficiency from the Dantzig-Wolfe reformulation, we re-ran the experiments with columns deletion -- the removal of columns in the master problem when they leave the basis during the simplex method -- to see if the memory requirements of column generation on LP (\ref{LPw}) could be further reduced. The results of these experiments is given in Table \ref{tab:coldelete}; however, column deletion resulted in a very minor reduction in maximum memory usage while dramatically increasing running times. The memory reduction was not sufficient for the algorithms on LP (\ref{LPw}) to be as efficient as a Dantzig-Wolfe implementation. 

\begin{table}
\centering
\begin{tabular}{|c|c|c|c|c|c|} \hline
    & \multicolumn{2}{|c|}{1-col} & \multicolumn{2}{|c|}{$n$-col} & DW-L \\ \hline
    & None & With & None & With & None  \\ \hline
%    55,296 & 8.1 & 8.8 & 7.8 & 7.7 \\ \hline
    2,177,280 & 42 & 41& 44  & 42 & 33 \\ \hline
    4,976,640 & 85  & 84& 86 & 84 & 63 \\ \hline
    5,971,968 & 100  & 100& 102 & 100 &70\\ \hline
    25,288,704 & 398  & 398& 399 & 399 &212 \\ \hline
    28,449,792 & 447 & 446 & 450 & 446 &244 \\ \hline
    31,850,496 & 499 & 499 & 501 & 500 & 286\\ \hline
    63,700,992 & 990 &988 & 995 & 989 & 542\\ \hline
    84,934,656 & 1,290 &1,280 & 1,290 & 1,280 &691 \\ \hline
    127,401,984 & 1,920 & 1,920 & 1,930 & 1,920 &1,020\\ \hline
    148,635,648 & 2,240 & 2,240 & 2,250 & 2,240 & 1,150\\ \hline
    191,102,976 & 2,880 & 2,880 & 2,870 & 2,870 & 1,520 \\ \hline
\end{tabular}
\caption{Maximum memory used by the 1-col and $n$-col column generation algorithms, without deletion (None) and with column deletion (With), given in MB. The reduction in memory requirements is negligible. The memory use for DW-L is repeated from Table \ref{tab:memcomp} for comparison.} \label{tab:coldelete}
\end{table}

In the column generation algorithms on LP (\ref{LPw}), the bottleneck for faster running times is the explicit choosing of new columns, which is dependent on the exponential-sized cost vector $c$. The efficiency of each step of the Dantzig-Wolfe reformulation algorithm is somewhat less apparent; we examine the breakdown of running times each iteration in Table \ref{tab:Percentage}. The processing of $c$ to produce the updated, unique best-cost vector $b$ for the pricing problem is the majority of computational effort, while solving the pricing problem and subsequent master problem are efficient.

\begin{table}[t]% Come back to discussing initialization time
\begin{center}
\begin{tabular}{|c|c|} \hline
Step & Percentage of Computation Time \\ \hline
Setup LP (\ref{LPRM}) & $< 0.1 \%$ \\ \hline
Solve LP (\ref{LPRM}) & 1.1\% \\ \hline
Update $(c^T-\y^TA_m)$ & 72.5\% \\ \hline
Calculate $b$ & 26.2\% \\ \hline
Solve LP (\ref{Uprice})& $< 0.1\%$\\ \hline
\end{tabular}
\end{center}
\caption{Percentage computation time per step of an average iteration of column generation. Most of the effort is spent on the setup of LP (\ref{Uprice}); the computation times for solving LP (\ref{Uprice}) and the setup of the next LP (\ref{LPRM}) contribute negligibly to the total.}\label{tab:Percentage}
\end{table}

\section{Concluding Remarks}\label{sec:conc}

The computation of an exact barycenter is costly in practice, and provably hard for data in general position \cite{ab-21,bp-21a}. In this paper, we studied two column generation strategies - one on a suitable linear programming formulation for such data, one based on a Dantzig-Wolfe reformulation. While both of these provide significant improvements in scalability, especially though a memory-efficient implementation, computations remain hard. In this work, we used a couple of standard column generation techniques to improve the practical performance, such as the generation of multiple columns in each iteration, simple deletion strategies, or the generation of any (not necessarily best) improving columns. They typically have a positive impact, and we believe further refinements of the presented approach are interesting direction of future work, but they cannot overcome the underlying hardness of the problem. 

As most barycenter algorithms require an explicit specification of a set of possible support points, and the size of this set is a bottleneck to computations, the {\em direct and efficient} generation of support points remains a key interest in the community. It translates to an efficient generation of columns for LP (\ref{LPw}). It remains open whether it is possible to efficiently generate a (single) improving column; hardness of an exact barycenter computation implies that either the generation of a column itself or the number of columns that have to be generated cannot be polynomial.

The methods to do so will require a quite different approach: while we showed that it is efficient to evaluate the reduced cost for any given combination $s_h \in S^*$, the challenge lies in finding an improving one without an explicit evaluation of each combination in $S^*$. We see potential for a competitive algorithm through an approximation of the data going into the reduced cost vector computation, which may lead to a heuristic algorithm, or through the setup and solution of an integer program for pricing, which may lead to further improvements for an exact computation.

\section*{Acknowledgments}

We would like to thank Ethan Anderes for the implementation of a visualization basis for barycenters used in \cite{abm-16}, which we modified to produce the figures of Sections \ref{sec:intro} and  \ref{sec:master}. We would also like to thank Jon Lee for the many helpful discussions about transportation problems and total unimodularity.

The authors gratefully acknowledge support of this work by the National Science Foundation, Algorithmic Foundations, Division of Computing and Communication Foundations, under grant 2006183 {\em Circuit Walks in Optimization}; by the Airforce Office of Scientific Research under grant FA9550-21-1-0233 {\em The Hirsch Conjecture for Totally-Unimodular Polyhedra}; and by the Simons Foundation under Collaboration Grant 524210 {\em Polyhedral Theory in Data Analytics} before.

\bibliography{barycenters_literature}
\bibliographystyle{plain}

\end{document}